\def\YEAR{\year}\newcount\VOL\VOL=\YEAR\advance\VOL by-1995
\def\firstpage{1}\def\lastpage{1000}
\def\received{}\def\revised{}
\def\communicated{}
\def\magnification{\afterassignment\m@g\count@}
\def\m@g{\mag=\count@\hsize6.5truein\vsize8.9truein\dimen\footins8truein}
\font\eightrm=cmr8
\font\caps=cmcsc10                    
\font\Caps=cmcsc10 scaled \magstep1   
\def\DocMath{}
\renewcommand{\@evenhead}{%
    \ifnum\thepage>\lastpage\rlap{\thepage}\hfill%
    \else\rlap{\thepage}\slshape\leftmark\hfill{\caps\SAuthor}\hfill\fi}%
\renewcommand{\@oddhead}{%
    \ifnum\thepage=\firstpage{\DocMath\hfill\llap{\thepage}}%
    \else{\slshape\rightmark}\hfill{\caps\STitle}\hfill\llap{\thepage}\fi}%
\def\TSkip{\bigskip}
\newbox\TheTitle{\obeylines\gdef\GetTitle #1
\ShortTitle  #2
\SubTitle    #3
\Author      #4
\ShortAuthor #5
\EndTitle
{\setbox\TheTitle=\vbox{\baselineskip=20pt\let\par=\cr\obeylines%
\halign{\centerline{\Caps##}\cr\noalign{\medskip}\cr#1\cr}}%
	\copy\TheTitle\TSkip\TSkip%
\def\next{#2}\ifx\next\empty\gdef\STitle{#1}\else\gdef\STitle{#2}\fi%
\def\next{#3}\ifx\next\empty%
    \else\setbox\TheTitle=\vbox{\baselineskip=20pt\let\par=\cr\obeylines%
    \halign{\centerline{\caps##} #3\cr}}\copy\TheTitle\TSkip\TSkip\fi%
\centerline{\caps #4}\TSkip\TSkip%
\def\next{#5}\ifx\next\empty\gdef\SAuthor{#4}\else\gdef\SAuthor{#5}\fi%
\ifx\received\empty\relax
    \else\centerline{\eightrm Received: \received}\fi%
\ifx\revised\empty\TSkip%
    \else\centerline{\eightrm Revised: \revised}\TSkip\fi%
\ifx\communicated\empty\relax
    \else\centerline{\eightrm Communicated by \communicated}\fi\TSkip\TSkip%
\catcode'015=5}}\def\Title{\obeylines\GetTitle}
\def\Abstract{\begingroup\narrower
    \parskip=\medskipamount\parindent=0pt{\caps Abstract. }}
\def\EndAbstract{\par\endgroup\TSkip}
\long\def\MSC#1\EndMSC{\def\arg{#1}\ifx\arg\empty\relax\else
     {\par\narrower\noindent%
     2010 Mathematics Subject Classification: #1\par}\fi}
\long\def\KEY#1\EndKEY{\def\arg{#1}\ifx\arg\empty\relax\else
	{\par\narrower\noindent Keywords and Phrases: #1\par}\fi\TSkip}
\newbox\TheAdd\def\Addresses{\vfill\copy\TheAdd\vfill
    \ifodd\number\lastpage\vfill\eject\phantom{.}\vfill\eject\fi}
{\obeylines\gdef\GetAddress #1
\Address #2 
\Address #3
\Address #4
\EndAddress
{\def\xs{4.3truecm}\parindent=0pt
\setbox0=\vtop{{\obeylines\hsize=\xs#1\par}}\def\next{#2}
\ifx\next\empty 
     \setbox\TheAdd=\hbox to\hsize{\hfill\copy0\hfill}
\else\setbox1=\vtop{{\obeylines\hsize=\xs#2\par}}\def\next{#3}
\ifx\next\empty 
     \setbox\TheAdd=\hbox to\hsize{\hfill\copy0\hfill\copy1\hfill}
\else\setbox2=\vtop{{\obeylines\hsize=\xs#3\par}}\def\next{#4}
\ifx\next\empty\ 
     \setbox\TheAdd=\vtop{\hbox to\hsize{\hfill\copy0\hfill\copy1\hfill}
                \vskip20pt\hbox to\hsize{\hfill\copy2\hfill}}
\else\setbox3=\vtop{{\obeylines\hsize=\xs#4\par}}
     \setbox\TheAdd=\vtop{\hbox to\hsize{\hfill\copy0\hfill\copy1\hfill}
	        \vskip20pt\hbox to\hsize{\hfill\copy2\hfill\copy3\hfill}}
\fi\fi\fi\catcode'015=5}}\gdef\Address{\obeylines\GetAddress}
\newcommand{\eps}{\varepsilon}
\newcommand{\OO}{\mathscr{O}}
\newcommand{\FF}{\mathscr{F}}
\newcommand{\GG}{\mathscr{G}}
\newcommand{\HH}{\mathscr{H}}
\newcommand{\II}{\mathscr{I}}
\newcommand{\JJ}{\mathscr{J}}
\newcommand{\PP}{\mathbb{P}}
\newcommand{\ZZ}{\mathbb{Z}}
\newcommand{\Hom}{\mathrm{Hom}}
\newcommand{\Ext}{\mathrm{Ext}}
\newcommand{\cExt}{\mathcal{E}\!xt}
\newcommand{\cHom}{\mathscr{H}\!om}
\newtheorem{definition}{Definition}[section]
\newtheorem{theorem}[definition]{Theorem}
\newtheorem{proposition}[definition]{Proposition}
\newtheorem{remark}[definition]{Remark}
\newtheorem{corollary}[definition]{Corollary}
\newtheorem{code}[definition]{Code}
\begin{document}
\Title The Unirationality of Hurwitz Spaces 
of  $6$-Gonal Curves of Small Genus
\ShortTitle Hurwitz Schemes of  $6$-Gonal Curves 
\SubTitle   
\Author Florian Gei\ss
\ShortAuthor Florian Gei\ss 
\EndTitle
\Abstract 
	In this short note we prove the unirationality of Hurwitz spaces 
	of $6$-gonal curves of genus $g$ with $5\leq g\leq 28$ or $g=30,31,35,36,40,45$. Key ingredient is a liaison construction
	in $\PP^1 \times \PP^2$. By semicontinuity, the proof of the 
	dominance of this construction is reduced to a computation of a single curve
	over a finite field.
	\EndAbstract
\MSC 14H10, 
     14M20, 
     14M06, 
     14H51, 
     14Q05  
\EndMSC
\KEY 
     Unirationality, Hurwitz space.   
\EndKEY
\Address 
	Florian Gei\ss
	Mathematik und Informatik
	Universit\"at des Saarlandes, Campus E2 4, 
	66123 Saarbr\"ucken, 
	Germany
	fg@math.uni-sb.de
\Address
\Address
\Address
\EndAddress

\section{Introduction}
The study of the birational geometry of moduli spaces  
of curves with additional structures such as marked points 
or line bundles is a central topic in algebraic geometry, see 
for example the books \cite{HarrisMorrison} and \cite{ACGH2}.
The Hurwitz space $\HH(d,w)$ parametrizes $d$-sheeted 
branched simple covers of the projective line by smooth curves 
of genus $g$ with branch divisor of degree $w=2g+2d-2$ up to 
isomorphism,
\begin{equation*}
	\HH(d,2g+2d-2)=\{C\xrightarrow{d:1}\PP^1\ \text{ simply branched }|\ 
                   C\text{ smooth of genus } g \}/\sim.
\end{equation*}
It is a classical result by Arbarello and Cornalba \cite{ArbCorFootnotes}
based on a work of Segre \cite{Segre} that theses spaces are 
unirational for all $d\leq 5$ and all $g\geq d-1$ and in few cases 
for higher gonality, namely for $d=6$ and $5\leq g \leq 10$ or $g=12$
and for $d=7$ and $g=7$.\\
In this paper we present the following extension of this result to 
significantly higher genus for $6$-gonal curves. 
\begin{theorem}
	Over an algebraically closed field of characteristic zero, 
	the Hurwitz spaces 	$\HH(6,2g+10)$ of $6$-gonal curves of genus $g$ are unirational for 
    \begin{equation} \label{goodvalues}
    5\leq g\leq 28\text{ or }g=30,31,33,35,36,40,45.
    \end{equation}
\end{theorem}
Our proof is based on the observation that a general $6$-gonal curve in $\PP^1 \times \PP^2$ can be linked in two steps to the union of a rational curve and a collection of lines. It turns out that for small genera this process can be reversed by starting with a general rational curve and general lines. \\
To show that the described construction yields a parametrization of the Hurwitz space, we only need to run the construction for a single curve over a finite field. Semicontinuity then ensures that all assumptions we made actually hold for 
an open dense subset of $\HH(6,2g+10)$ in characteristic zero.
Since the construction works a priori only for finitely many genera we settle for a 
computer-aided verification using the computer 
algebra system \textit{Macaulay2} \cite{M2}.\\ 
An immediate consequence of our approach is that in the considered cases the general $6$-gonal curve has a plane model as expected from Brill-Noether theory.
\begin{corollary}
For $g$ among (\ref{goodvalues}) and $d=\left\lceil\frac{2}{3}g+2\right\rceil$ the Brill-Noether locus $W^2_d(C)$ of a general curve $C\in \HH(6,10+2g)$ has a smooth generically reduced component of expected dimension $\rho(g,2,d)$.
\end{corollary}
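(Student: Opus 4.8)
The plan is to extract the required $g^2_d$ directly from the embedding $C\subset\PP^1\times\PP^2$ produced in the proof of the Theorem, and then to reduce smoothness of $W^2_d(C)$ at the corresponding point to one further finite-field computation, exactly in the spirit of the Theorem's proof. First I would record what the construction gives: for each $g$ among (\ref{goodvalues}) one obtains a general $C\in\HH(6,2g+10)$ sitting in $\PP^1\times\PP^2$ with bidegree $(6,d)$, where $d=\lceil\tfrac{2}{3}g+2\rceil$, the first projection being the gonal $g^1_6$ and the second projection $\pi_2$ realizing $C$ as the normalization of a plane curve of degree $d$. Set $L:=\pi_2^{*}\OO(1)=\OO_C(0,1)$. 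Then $\deg L=d$ and $h^0(C,L)\geq 3$, so $[L]\in W^2_d(C)$; in particular $W^2_d(C)\neq\emptyset$. Since $\rho(g,2,d)=3d-2g-6\geq 0$ for this value of $d$, and since every irreducible component of $W^2_d(C)$ has dimension at least $\rho(g,2,d)$ by the Existence Theorem, it suffices to prove that $W^2_d(C)$ is a smooth scheme of dimension exactly $\rho(g,2,d)$ at the single point $[L]$: the irreducible component $Z$ of $W^2_d(C)$ through $[L]$ then has dimension $\rho(g,2,d)$, and, as the smooth locus of $W^2_d(C)$ is open and dense in $Z$, the component $Z$ is generically smooth, in particular generically reduced.

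Next I would invoke the standard description of the tangent space of a Brill--Noether locus (see \cite{ACGH2}): if $[L]\notin W^3_d(C)$, i.e.\ $h^0(C,L)=3$, then
\[
\dim T_{[L]}W^2_d(C)=\rho(g,2,d)+\dim\ker\mu_{0,L},
\]
where $\mu_{0,L}\colon H^0(C,L)\otimes H^0(C,K_C\otimes L^{-1})\to H^0(C,K_C)$ is the Petri map. So everything reduces to showing, for a general $C$, that $h^0(C,L)=3$ and that $\mu_{0,L}$ is injective. Both are open conditions in families of smooth curves, so it is enough to verify them for a single curve, and the explicit curve $C_0\subset\PP^1\times\PP^2$ over $\mathbb{F}_p$ from the proof of the Theorem is at hand. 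With $L_0:=\OO_{C_0}(0,1)$, in the same \textit{Macaulay2} session one checks that $\deg L_0=d$, that $h^0(C_0,L_0)=3$ (for instance from surjectivity of $H^0(\PP^1\times\PP^2,\OO(0,1))\to H^0(C_0,L_0)$), and that the multiplication map $H^0(C_0,L_0)\otimes H^0(C_0,K_{C_0}\otimes L_0^{-1})\to H^0(C_0,K_{C_0})$ has rank $3(g-d+2)=g-\rho(g,2,d)$, i.e.\ is injective. Spreading out over the rational parameter space of the construction and using that $h^0$ is upper semicontinuous while the rank of $\mu_0$ is lower semicontinuous, both properties descend to the generic curve in characteristic zero; together with the reduction above this proves the Corollary. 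Note that $d=\lceil\tfrac{2}{3}g+2\rceil$ is precisely the least degree of a plane model allowed by Brill--Noether theory, which is the ``as expected'' in the statement.

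The main obstacle is the injectivity of $\mu_{0,L}$. For $g\gtrsim 11$ the general $6$-gonal curve of genus $g$ is not a general curve of genus $g$, so the Gieseker--Petri theorem gives nothing for this particular $L$, and the non-degeneracy of the Petri map has to be genuinely read off from the explicit $\mathbb{F}_p$-model; one also has to make sure that reduction modulo $p$ has not introduced a rank drop, but semicontinuity only decreases $\operatorname{rank}\mu_0$ and increases $h^0$ under specialization, i.e.\ it works in exactly the direction needed to lift a successful computation over $\mathbb{F}_p$ back to characteristic zero. A secondary point to keep in mind is the hypothesis $[L]\notin W^3_d(C)$: were $h^0(C,L)$ to jump to $4$, the tangent-space formula above would no longer be the relevant one, so the equality $h^0(C_0,L_0)=3$ must itself be part of the verification.
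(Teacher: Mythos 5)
Your proposal is correct and follows the same overall strategy as the paper: take $L=\OO_C(0,1)$ from the explicit embedding $C\subset\PP^1\times\PP^2$, reduce smoothness of $W^2_d(C)$ at $[L]$ to completeness of the series and injectivity of the Petri map, certify these on the single $\mathbb{F}_p$-curve produced by the construction, and transport the conclusion to the general curve in characteristic zero via semicontinuity, dominance of $H_g\dashrightarrow \HH(6,10+2g)$, and irreducibility of the Hurwitz space. The one genuine divergence is the computational certificate for Petri injectivity. You propose to compute the rank of the multiplication map $H^0(L)\otimes H^0(K_C\otimes L^{-1})\to H^0(K_C)$ directly on the finite-field model. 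The paper instead proves Proposition \ref{BrillNoetherplane}: by adjunction the Petri map is identified with $H^0(\PP^2,\OO(1))\otimes H^0(\PP^2,\II_\Delta(d-4))\to H^0(\PP^2,\II_\Delta(d-3))$, where $\Delta$ is the node set of the plane model, and injectivity is forced purely by the shape of the minimal free resolution of $I_\Delta$ (no linear syzygies among the degree-$k$ and $k+1$ generators when $\Delta$ has the generic Gaeta resolution). The code then only checks the Betti table of $I_\Delta$ (step \texttt{i14}), which simultaneously certifies Petri injectivity, completeness of the $\mathfrak{g}^2_d$ (your condition $h^0(L)=3$, equivalently $\Delta$ imposing independent conditions on curves of degree $d-4$), and, via the irreducibility criterion, absolute irreducibility of $C$ --- three birds with one Gr\"obner basis. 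Your direct rank computation is equally valid and perhaps more transparent, but it requires separately verifying $h^0(L)=3$ (which you correctly flag) and does not yield the irreducibility statement as a by-product.
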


\subsection*{Acknowledgements}
This work is part of my PhD thesis. I would like to thank my advisor 
Frank-Olaf Schreyer for valuable discussions and continuous support.
I am grateful to the DFG for support through the priority program SPP 1489.
\section{Preliminaries}
Throughout this paper, we fix the following notation: Let 
$\PP=\PP^1\times\PP^2$ be the product of the projective line 
and the projective plane over a field $K$ with projections
$\pi_1: \PP \rightarrow \PP^1$ and $\pi_2:\PP \rightarrow \PP^2$. 
For $a,b \in \ZZ$ we write 
\begin{equation*}
	\OO_\PP(a,b)=\OO_{\PP^1}(a)\boxtimes \OO_{\PP^2}(b) 
	            =\pi^*_1\OO_{\PP^1}(a)  \otimes \pi^*_2\OO_{\PP^2}(b) 
\end{equation*}
and denote with
$R = \bigoplus_{i,j} H^0(\PP,\OO_\PP(i,j)) \cong K[x_0,x_1,y_0,y_1,y_2]$ 
the bihomogeneous coordinate ring of $\PP$. By a curve $C$ in $\PP$, we mean an equidimensional subscheme of codimension $2$ which is locally a complete 
intersection. We say that $C$ is \emph{(geometrically) linked} to a curve  $C' \subset \PP$ by a complete 
intersection $X\subset \PP$ if $C$ and $C'$ have no common components and $C\cup C'=X$. 
The Chow ring of $\PP$ is generated by classes $\alpha$ and $\beta$ which are the pullback of a point in $\PP^1$
and the pullback of a hyperplane in $\PP^2$, respectively. The bidegree $(d_1,d_2)$ of a curve $C$ is given by  $d_1=[C].\alpha$ and $d_2=[C].\beta$.

As in the classical setting of liaison of subschemes in $\PP^n$, we have the 
following 
\begin{proposition}[Exact sequence of liaison]
	Let $C$ be a curve of bidegree $(d_1,d_2)$ that is linked to $C'$ via 
	a complete 	intersection $X$ defined by forms of bidegree $(a_1,b_1)$ 
	and $(a_2,b_2)$. We set $a=a_1+a_2$ and $b=b_1+b_2$. 
	\begin{itemize}
		\item[(a)] There is a exact sequence
				$$
					0 \to \omega_C \to \omega_X \to \OO_{C'}(a-2,b-3)\to 0.
				$$
		\item[(b)] The curve $C'$ has bidegree 
				$(d_1',d_2')=(b_1b_2-d_1, a_1b_2+a_2b_1-d_2)$
				and arithmetic genus 
				$p_a(C')=p_a(X)-\left(d_1(a-2)+d_2(b-3)+(1-p_a(C))\right)$.
	\end{itemize}
\end{proposition}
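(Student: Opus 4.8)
This is the $\PP^1\times\PP^2$-analogue of the classical liaison exact sequence in $\PP^n$, and the plan is to transport the standard argument (see e.g.\ Peskine--Szpiro) almost verbatim, the only structural change being that here $\omega_\PP=\OO_\PP(-2,-3)$ plays the role of $\omega_{\PP^n}=\OO_{\PP^n}(-n-1)$.

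For part (a) I would first identify $\omega_X$. Since $X$ is cut out in the smooth threefold $\PP$ by a regular sequence of two forms of bidegrees $(a_1,b_1)$ and $(a_2,b_2)$, it is a local complete intersection, hence Cohen--Macaulay and Gorenstein, with conormal sheaf $\big(\OO_\PP(-a_1,-b_1)\oplus\OO_\PP(-a_2,-b_2)\big)|_X$; adjunction then gives $\omega_X\cong\big(\omega_\PP\otimes\OO_\PP(a,b)\big)|_X\cong\OO_X(a-2,b-3)$. Next I would invoke the two algebraic facts underlying linkage. Because the link is geometric ($C$ and $C'$ have no common component and $C\cup C'=X$) and $X$, $C$, $C'$ are unmixed, one has $\II_{C'/X}=\big(0:_{\OO_X}\II_{C/X}\big)=\cHom_{\OO_X}(\OO_C,\OO_X)$; and because $C$ is Cohen--Macaulay of the same pure dimension as the Gorenstein scheme $X$, duality gives $\cHom_{\OO_X}(\OO_C,\omega_X)\cong\omega_C$ with all higher $\cExt$'s vanishing. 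Tensoring the first identity by the line bundle $\omega_X$ and combining with the second yields $\omega_C\cong\II_{C'/X}\otimes\omega_X$. Finally I would tensor the structure sequence $0\to\II_{C'/X}\to\OO_X\to\OO_{C'}\to0$ by $\omega_X=\OO_X(a-2,b-3)$ to obtain
$$0\to\omega_C\to\omega_X\to\OO_{C'}(a-2,b-3)\to0,$$
which is the asserted sequence.

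For part (b) I would work in $A^\ast(\PP)=\ZZ[\alpha,\beta]/(\alpha^2,\beta^3)$. There a curve of bidegree $(e_1,e_2)$ has class $e_2\,\alpha\beta+e_1\,\beta^2$ and a divisor of bidegree $(a_i,b_i)$ has class $a_i\alpha+b_i\beta$, so $[X]=(a_1\alpha+b_1\beta)(a_2\alpha+b_2\beta)=(a_1b_2+a_2b_1)\,\alpha\beta+b_1b_2\,\beta^2$. Since $C$ and $C'$ share no component, $[X]=[C]+[C']$ in $A^2(\PP)$, and comparing the coefficients of $\beta^2$ and $\alpha\beta$ gives $d_1'=b_1b_2-d_1$ and $d_2'=a_1b_2+a_2b_1-d_2$. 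For the arithmetic genus I would take Euler characteristics in the liaison sequence with the roles of $C$ and $C'$ exchanged, namely $0\to\omega_{C'}\to\omega_X\to\OO_C(a-2,b-3)\to0$, and use that $\chi(\omega_Z)=-\chi(\OO_Z)=p_a(Z)-1$ for a Cohen--Macaulay curve $Z$ (Serre duality), together with Riemann--Roch $\chi\big(\OO_C(a-2,b-3)\big)=(a-2)d_1+(b-3)d_2+1-p_a(C)$, the degree of $\OO_\PP(p,q)$ on a curve of bidegree $(d_1,d_2)$ being $pd_1+qd_2$ from the same intersection table. Solving for $p_a(C')$ then gives $p_a(C')=p_a(X)-\big(d_1(a-2)+d_2(b-3)+(1-p_a(C))\big)$.

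I do not expect a genuine obstacle here: everything apart from the two algebraic inputs in part (a) is routine bookkeeping with the bigraded twist. The only point needing care is the pair of identities $\II_{C'/X}\cong\cHom_{\OO_X}(\OO_C,\OO_X)$ and $\cHom_{\OO_X}(\OO_C,\omega_X)\cong\omega_C$; both are purely local statements and hold because $\PP$ is smooth and $X$ is a complete intersection, so the classical arguments from liaison theory apply without change, and one merely has to keep the twist conventions straight, in particular $\omega_\PP=\OO_\PP(-2,-3)$ and the formula $pd_1+qd_2$ for the degree of $\OO_\PP(p,q)$ on a curve of bidegree $(d_1,d_2)$.
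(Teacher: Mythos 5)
Your proposal is correct, and part (b) is carried out exactly as in the paper (the bidegree via $[C]+[C']=[X]$ in the Chow ring, the genus by Euler characteristics of the liaison sequence with the roles of $C$ and $C'$ exchanged). For part (a) you take a recognizably different, though equivalent, route. The paper applies $\Hom_{\OO_\PP}(-,\omega_\PP)$ to the sequence $0 \to \II_{C/X} \to \OO_X \to \OO_C \to 0$ on the ambient threefold and reads the result off the long exact $\cExt$-sequence, absorbing all the duality content into the single identification $\cExt^2(\II_{C/X},\omega_\PP)\cong\OO_{C'}(a-2,b-3)$. You instead work internally on the Gorenstein scheme $X$ in the classical Peskine--Szpiro style: compute $\omega_X\cong\OO_X(a-2,b-3)$ by adjunction, use $\II_{C'/X}=(0:_{\OO_X}\II_{C/X})=\cHom_{\OO_X}(\OO_C,\OO_X)$ together with $\cHom_{\OO_X}(\OO_C,\omega_X)\cong\omega_C$, and tensor the structure sequence of $C'$ in $X$ by the line bundle $\omega_X$. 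The two arguments rest on the same underlying facts --- Gorenstein duality for the complete intersection and the annihilator description of a geometric link --- but your version makes explicit precisely the lemmas that the paper's one-line claim about $\cExt^2(\II_{C/X},\omega_\PP)$ conceals, at the cost of having to justify the adjunction computation of $\omega_X$ and the unmixedness hypotheses separately; the paper's version is shorter because the twist $\OO_{C'}(a-2,b-3)$ and the sheaf $\omega_C$ both fall out of the same $\cExt$-computation. Both are standard and both are complete modulo the same two classical inputs from liaison theory, which you correctly identify and which hold here since curves in this paper are by definition equidimensional local complete intersections.
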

\begin{proof} To prove the first part, consider the standard exact sequence
$$ 0 \to \II_{C/X} \to \OO_X \to \OO_C \to 0 $$
and apply $\Hom_{\OO_\PP}(-,\omega_\PP)$. From the long exact sequence, we get
$$ 0 \to \omega_{C} \to \omega_X \to \cExt^2(\II_{C/X},\omega_\PP) \to 0$$
but $\cExt^2(\II_{C/X},\omega_\PP)=\OO_{C'}(a-2,b-3)$ since $C$ and $C'$ are linked by $X$.
The formula for the genus follows immediately.  To compute the bidegree, note that $[C]+[C']=[X]=(b_1b_2)\beta^2+(a_1b_2+a_2b_1)\alpha \beta$ in the Chow ring of $\PP$. \end{proof}
Recall the following well-known fact about minimal resolutions of points in the plane.
\begin{proposition}\label{HilbertBurchPlanePoints}
Let $\Delta$ be a collection of $\delta$ general points in $\PP^2$ and let $k$ be maximal under the condition $\eps=\delta-{k+1 \choose 2}\geq 0$. Then the minimal free resolution of $\OO_\Delta$ is of the form
\begin{equation*} 
0 \to \GG \to \FF \to \OO_{\PP^2} \to \OO_{\Delta} \to 0
\end{equation*}
with $\FF=\OO(-k)^{k+1-\eps}$ and $\GG=\OO(-k-1)^{k-2\eps}\oplus \OO(-k-2)^{\eps}$ if 
$2\eps \leq k$ and $\FF= \OO(-k)^{k+1-\eps}\oplus \OO(-k-1)^{2\eps-k}$ and $\GG= \OO(-k-2)^{\eps}$ else. 
\end{proposition}
\begin{proof} \cite{Gae51}
\end{proof}
We also note the following simple but useful criterion for the irreducibility of plane curves. Recall that a variety over a field $K$ is called absolutely irreducible if it is irreducible as a variety over the algebraic closure $\overline{K}$.

\begin{proposition}
Let $C$ be a plane curve of degree $d$ with $\delta\leq \frac{d(d-3)}{2}$ ordinary double points 
and no other singularities. If the singular locus $\Delta$ of $C$ has a resolution as in \ref{HilbertBurchPlanePoints} then $C$ is absolutely irreducible.
\end{proposition}
\begin{proof}
Assume that $C$ decomposes into two curves $C_1$ and $C_2$ of degree $d_1$ and $d_2$ defined by
homogeneous polynomials $f_1$ and $f_2$. By assumption, $C_1$ and $C_2$ intersect transversely 
in $d_1\cdot d_2$ distinct points. 
First, we reduce to the case $d_1,d_2 \leq k$ where $k=\left\lceil (\sqrt{9+8\delta}-3)/2 \right\rceil$ is the minimal degree of generators of $I_\Delta$. Clearly, the case that one of the generators
has degree strictly larger than $k+1$ is not possible since $I_\Delta \subset (f_1,f_2)$ is generated in
degree $k$ and (possibly) $k+1$. The cases $d_1=k+1$, say, and $d_2\leq k+1$ can be
excluded by considering the number of minimal generators of $I_\Delta$ in degrees $k$ and $k+1$. \\
We are left with the case $d_1,d_2 \leq k$. Trivially, we can assume that $\delta-d_1d_2\geq 0$. A polynomial of the form $sf_1 + tf_2$ of degree $k$ lies in
$I_\Delta$ if it vanishes at the remaining $\delta-d_1d_2$ points. Hence,
\begin{eqnarray*}
h^0(\II_\Delta(k)) &\geq& {k-d_1+2 \choose 2} + {k-d_2+2 \choose 2} - \delta + d_1d_2\\
&=& 2 { k+2 \choose 2} + {d-1 \choose 2} - (dk+1) -\delta
\end{eqnarray*}
But this is strictly larger than ${k+2 \choose 2} - \delta$ since $d\geq k+3$.
\end{proof}
Recall from \cite{ACGH1} the following facts from Brill-Noether theory: For a fixed smooth curve $C$ of genus 
$g$, the Brill-Noether locus
\begin{equation}
W^r_d(C) = \{ L \in \mathrm{Pic}^d(C) \ |\ h^0(L) \geq r+1 \}
\end{equation}
is of dimension at least equal to the Brill-Noether number 
\begin{equation}
\rho(g,r,d)= g-(r+1)(g-d+r).
\end{equation}
The tangent space at a linear series $L \in W^r_d(C) \smallsetminus W^{r+1}_d(C)$ is the dual of the cokernel of the Petri-map
\begin{equation}
\mu_L: H^0(C,L) \otimes H^0(C,\omega_C \otimes L^{-1})\to H^0(C,\omega_C)
\end{equation} 
Hence, $W^r_d(C)$ is smooth of dimension $\rho$ at $L$ if and only if $\mu_L$ is injective.
\begin{proposition} \label{BrillNoetherplane}
Let $C$ be a smooth curve of genus $g\geq 3$ with $|D|$ a base point free $\mathfrak{g}^2_d$, $d=\left\lceil \frac{2g}{3} +2 \right\rceil$, such that the image of $C$ under the associated map is a plane curve with $\delta={d-1 \choose 2}-g$ ordinary double points and no other singularities. If the singular locus $\Delta$ has a resolution as in \ref{HilbertBurchPlanePoints} then $|D|$ is a smooth point in $W^2_d(C)$.
\end{proposition}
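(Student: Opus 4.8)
The plan is to reduce the assertion to a single cohomological vanishing on $\PP^2$ and to read that vanishing off from the resolution in Proposition~\ref{HilbertBurchPlanePoints}. By the criterion recalled just above the statement, it suffices to prove that the Petri map $\mu_L$ is injective, where $L=\OO_C(D)$; en route we shall also see that $h^0(L)=3$, so that $L\in W^2_d(C)\smallsetminus W^3_d(C)$ and the criterion does apply.

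First I would set up the classical dictionary of adjoint curves. Let $\nu\colon C\to \overline C\subset\PP^2$ be the normalization of the nodal plane model, with node set $\Delta$ and homogeneous coordinates $z_0,z_1,z_2$ on $\PP^2$. From adjunction on $\overline C$ and the conductor identity $\omega_C=\nu^*\OO_{\PP^2}(d-3)\otimes\OO_C(-E)$, where $E$ is the sum of the two preimages of each node, one obtains canonical isomorphisms $H^0(C,\omega_C)\cong H^0(\PP^2,\II_\Delta(d-3))$ and $H^0(C,\omega_C\otimes L^{-1})\cong H^0(\PP^2,\II_\Delta(d-4))$ with the adjoint curves of degree $d-3$, resp.\ $d-4$, through $\Delta$; here one uses $H^i(\PP^2,\OO_{\PP^2}(-3))=H^i(\PP^2,\OO_{\PP^2}(-4))=0$ for $i=0,1$ to pass between $\overline C$ and $\PP^2$. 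Combined with $H^1(\PP^2,\II_\Delta(d-4))=0$ (which follows from the resolution, since we will see $d-4$ lies at or above the end of the resolution) and Riemann--Roch, this gives $h^0(L)=3$, hence $H^0(C,L)=\nu^*H^0(\PP^2,\OO_{\PP^2}(1))$. Under these identifications the Petri map $\mu_L$ becomes the multiplication map $m\colon H^0(\PP^2,\OO_{\PP^2}(1))\otimes H^0(\PP^2,\II_\Delta(d-4))\to H^0(\PP^2,\II_\Delta(d-3))$, $\ell\otimes g\mapsto \ell g$ (this intertwining is exactly the content of classical adjoint theory, cf.\ \cite{ArbCorFootnotes}, \cite{ACGH1}), so it remains to show $\ker m=0$.

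Next I would compute $\ker m$ geometrically. Tensoring the twisted Euler sequence $0\to \Omega^1_{\PP^2}(1)\to\OO_{\PP^2}^{\oplus 3}\xrightarrow{(z_0,z_1,z_2)}\OO_{\PP^2}(1)\to0$, whose terms are locally free, with $\II_\Delta(d-4)$ and taking global sections identifies $\ker m\cong H^0(\PP^2,\Omega^1_{\PP^2}(d-3)\otimes\II_\Delta)$. Thus the whole statement reduces to the vanishing $H^0(\PP^2,\Omega^1_{\PP^2}(d-3)\otimes\II_\Delta)=0$. To establish it, note that a direct computation from $\delta={d-1 \choose 2}-g$ and $d=\lceil\frac{2}{3}g+2\rceil$ yields ${d-3 \choose 2}\le\delta<{d-2 \choose 2}$ together with $2\bigl(\delta-{d-3 \choose 2}\bigr)\ge d-4$; hence in Proposition~\ref{HilbertBurchPlanePoints} one has $k=d-4$ and $2\eps\ge k$, so $\FF$ is a direct sum of copies of $\OO_{\PP^2}(-(d-4))$ and $\OO_{\PP^2}(-(d-3))$, while $\GG=\OO_{\PP^2}(-(d-2))^{\oplus\eps}$. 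Tensoring $0\to\GG\to\FF\to\II_\Delta\to0$ with the locally free sheaf $\Omega^1_{\PP^2}(d-3)$ then presents $\FF\otimes\Omega^1_{\PP^2}(d-3)$ as a direct sum of copies of $\Omega^1_{\PP^2}(1)$ and $\Omega^1_{\PP^2}$, both with $H^0=0$, and $\GG\otimes\Omega^1_{\PP^2}(d-3)=\Omega^1_{\PP^2}(-1)^{\oplus\eps}$, with $H^0=H^1=0$ by Bott vanishing on $\PP^2$; the associated long exact sequence then forces $H^0(\PP^2,\Omega^1_{\PP^2}(d-3)\otimes\II_\Delta)=0$. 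This gives $\ker m=0$, hence $\mu_L$ injective, hence $W^2_d(C)$ smooth of dimension $\rho(g,2,d)$ at $|D|$.

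I expect the main obstacle to be the bookkeeping in the last step: one must verify carefully, from $d=\lceil\frac{2}{3}g+2\rceil$, that the ideal of $\Delta$ is generated precisely in degrees $d-4$ and $d-3$ with all syzygies concentrated in degree $d-2$ --- this is the precise sense in which the chosen $d$ is ``just right'' --- and one must track the boundary behaviour of Bott vanishing ($H^0(\Omega^1_{\PP^2})=0$ but $H^1(\Omega^1_{\PP^2})\neq0$, whereas both vanish for $\Omega^1_{\PP^2}(-1)$) so that no stray cohomology slips in. A secondary point requiring care is confirming that the classical adjoint isomorphisms genuinely carry $\mu_L$ to the plain multiplication map $m$, and that $h^0(L)=3$ so that the Brill--Noether smoothness criterion is applicable at $L$.
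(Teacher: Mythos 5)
Your proof is correct and follows essentially the same route as the paper: identify the Petri map, via adjoint curves, with the multiplication map $H^0(\OO_{\PP^2}(1))\otimes H^0(\II_\Delta(d-4))\to H^0(\II_\Delta(d-3))$ and deduce injectivity from the shape of the Gaeta resolution of $I_\Delta$ with $k=d-4$; your Euler-sequence/Bott-vanishing computation is just a more explicit rendering of the paper's remark that there are no linear syzygies among the generators in degrees $k$ and $k+1$. One small point in your favour: your inequality $2\eps\ge k$ (equivalently $3(d-2)\ge 2g$, which follows from $d=\lceil 2g/3\rceil+2$) is the correct one, whereas the paper's proof writes $2\eps\le k$ (true only when $3\mid g$); in either reading the essential fact is that $\GG$ contains no $\OO(-k-1)$ summand, which is exactly what both arguments use.
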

\begin{proof} By adjunction, the Petri map for $\OO(D)$ can be identified with
$$H^0(\PP^2,\OO(1)) \otimes H^0(\PP^2,\II_\Delta(d-4)) \to H^0(\PP^2, \II_\Delta(d-3)).$$
Under the given assumptions the minimal degree of generators of $I_\Delta$ is precisely $k=d-4$. As $2\eps\leq k$ we see from the minimal free resolution of $I_\Delta$ that the Petri map is injective since there are no linear relations among the generators of degree $k$ and $k+1$.
\end{proof}

\section{Liaison construcion}
For $g\geq 5$, let $f:C \rightarrow \PP^1$ be an element of $\HH(6,10+2g)$ and let 
$\OO(D_1) = f^*\OO_{\PP^1}(1)$ be the $6$-gonal bundle. We assume that $C$ has a line
bundle $\OO(D_2)$ such that $|D_2|$ is a complete base point free $\mathfrak{g}^2_d$
with $d=d(g)=\left\lceil \frac{2g}{3} +2 \right\rceil$ minimal under the condition that 
the Brill-Noether number $\rho(g,2,d)\geq 0$. 
Suppose further that the map
\begin{equation}
\varphi: C\xrightarrow{|D_1|,|D_2|} \PP H^0(\OO(D_1)) \times \PP H^0(\OO(D_2)) = \PP
\end{equation}
is an embedding. In particular, this is the case when we assume that the plane model has only ordinary double points and no other singularities and for any node $p$ the points in the preimage of $p$ under $C\to \PP^2$ are not identified under the map to $\PP^1$.

Hence, we can identify $C$ with its image under $\varphi$. Furthermore, we assume that the map 
$H^0(\OO_\PP(a,3)) \to H^0(\OO_C(a,3))$ is of maximal rank for all $a \geq 1$.\\
To simplify matters, assume $g\equiv 0\ (12)$ for the moment. By the maximal rank assumption, we have
\begin{equation}  \label{cubics}
a_{\mathrm{Cubic}}:=\min\{a \ |\ H^0(\II_C(a,3))\neq 0\} = \frac{g}{4}
\end{equation}
and $h^0(\II_C(a_{\mathrm{Cubic}},3))=3$. Let $X=V(f_1,f_2)$ be the complete intersection defined 
by two general sections $f_i \in H^0(\II_{C}(a_i,b_i))$ of bidegrees $(a_1,b_1)=(a_2,b_2)=(a_{\mathrm{Cubic}},3)$. The curve $C'$, obtained 
by liaison of $C$ by $X$, is smooth of bidegree $(3,\frac{5}{6}g-2)$ and genus $g'=\frac{g}{2}-3$ with $h^0(\II_{C'}(a_{\mathrm{Cubic}},3))\geq 2$.\\
The geometric situation is understood best when thinking of $C$ as a family of collections of plane points over $\PP^1$. We expect the general fiber of $C$ to be a collection of $6$  points in $\PP^2$ which are cut out by $4$ cubics. We expect a finite number $\ell$ of distinguished fibers where the points lie on a conic as this is a codimension $1$ condition on the points. Since the residual three points under liaison are collinear exactly in the distinguished fibers we can compute $\ell$ by examining the geometry of $C'$. The projection of $C'$ to $\PP^2$ yields a divisor $D_2'$ of degree $d'>g'+2$. Our claim is that $\ell=d'-(g'+2)$.
Indeed, the image of $C'$ under the associated map 
\begin{equation}
\psi:C'\to \PP^1 \times \PP H^0(C',\OO(D_2')) =\PP^1 \times \PP^{d'-g'}
\end{equation}
 lies on the graph of the projection $S \to \PP^1$ where $S$ is a $3$-dimensional  scroll of degree $d'-g'-2$ swept out by the $3$-gonal series $|D_1'|$, i.e. 
\begin{equation} \label{trigonal}
\psi(C')\subset \PP^1\times S=\bigcup_{D \in |D'_1|} \{D\} \times \overline{D}. \end{equation}
See \cite{Schreyer} for a proof of this fact. $C'$ is obtained from $\psi(C')$ by projection from a linear subspace 
$\PP^1\times V\subset \PP^1 \times \PP^{d'-g'}$ of codimension $3$. A general space $V$ intersects $S$ in precisely $d'-g'-2$ points 
lying in distinct fibers over $\PP^1$. Clearly, under the projection the points of $D \in |D_1'|$ 
are mapped to $3$ collinear points if and only if $V$ meets the corresponding fiber of $S$.\\
To keep things neat, we consider again the 
case $g\equiv 0\ (12)$ which implies $\ell=\frac{1}{3}g-1$. Suppose further that $\ell \equiv 1\ (3)$.
If we assume that the map $H^0(\OO_\PP(a,2)) \to H^0(\OO_{C'}(a,2))$ is of maximal rank for all $a\geq 1$ then 
\begin{equation} \label{conics}
a_{\mathrm{Conic}}=\min\{a \ |\ H^0(\II_{C'}(a,2))\neq 0 \}= \frac{g'+2\ell+1}{3}
\end{equation}
and $h^0(\II_{C'}(a_\mathrm{Conic},2))=2$. Let $X'=V(f_1',f_2')$ be defined by two general forms $f_i' \in H^0(\II(a_i',b_i'))$ of bidegrees $(a_1',b_1')=(a_2',b_2')=(a_\mathrm{Conic},2)$ and let $C''$ denote the curve that is linked to $C'$ via $X'$.
The general fiber of $C''$ consists of a single point. In a distinguished fiber the conics of the complete intersection are reducible and have the line spanned by the points of the fiber of $C'$ as a common factor. 
Hence, $C''$ is a rational curve together with $\ell$ lines. The rational curve has
degree 
\begin{equation}
d''=\frac{g'+2\ell-2}{3}=\frac{7}{18}g - \frac{7}{3}.
\end{equation}
Turning things around we see that the difficulty lies in reversing the first linkage step. Indeed, a simple counting argument shows that for any $g$, the union $C''$ of $\ell$ general lines in $\PP$ and the graph of a general rational normal curve of degree $d''$ we have $$\min\{ a \in \ZZ |\ H^0(\II_{C''}(a,2)) \neq 0 \} =  \left\lceil \frac{2d''+3\ell}{5}\right\rceil-1\leq a_{\mathrm{Conics}}.$$
Hence, we always obtain a trigonal curve $C'$ as desired. However, for general choices of $C''$ and $X'$ we expect that the map $H^0(\OO_\PP(a,3)) \to H^0(\OO_{C'}(a_{\mathrm{Cubic}},3))$ is of maximal rank. In the case $g\equiv 0 (12)$, this restriction yields $h^0(\II_{C'}(a_{\mathrm{Cubic}},3)) =-\frac{g}{4}+12$, hence $g<48$. 
Checking all congruency classes of $g$, we expect that $C'$ can be linked to a general curve $C$ exactly in the cases    \begin{equation} \label{goodgs} 5 \leq g\leq 28 \text{ or } g=30,31,33,35,36,40,45. \end{equation}

Table \ref{tablealldata} lists the appearing numbers for all values of  $g$ in (\ref{goodgs}).

\begin{table}
$$\begin{array}{r|r|r|r|r|r|r|r}
g & d & (a_1,b_1), (a_2,b_2) & g' & d' & (a_1',b_1'), (a_2',b_2') & \ell & d'' \\ \hline
5 & 6 & (2,3), (2,3) & 2 & 6 & (3,2) , (2,2) & 2 & 2 \\ 
6 & 6 & (2,3), (1,3) & 0 & 3 & (1,2) , (1,2) & 1 & 0 \\ 
7 & 7 & (2,3), (2,3) & 1 & 5 & (2,2) , (2,2) & 2 & 1 \\ 
8 & 8 & (3,3), (2,3) & 2 & 7 & (3,2) , (3,2) & 3 & 2 \\ 
9 & 8 & (2,3), (2,3) & 0 & 4 & (2,2) , (2,2) & 2 & 2 \\ 
10 & 9 & (3,3), (3,3) & 4 & 9 & (4,2) , (4,2) & 3 & 4 \\ 
11 & 10 & (3,3), (3,3) & 2 & 8 & (4,2) , (4,2) & 4 & 4 \\ 
12 & 10 & (3,3), (3,3) & 3 & 8 & (4,2) , (3,2) & 3 & 3 \\ 
13 & 11 & (4,3), (3,3) & 4 & 10 & (5,2) , (4,2) & 4 & 4 \\ 
14 & 12 & (4,3), (4,3) & 5 & 12 & (6,2) , (5,2) & 5 & 5 \\ 
15 & 12 & (4,3), (4,3) & 6 & 12 & (5,2) , (5,2) & 4 & 4 \\ 
16 & 13 & (4,3), (4,3) & 4 & 11 & (5,2) , (5,2) & 5 & 4 \\ 
17 & 14 & (5,3), (5,3) & 8 & 16 & (7,2) , (7,2) & 6 & 6 \\ 
18 & 14 & (5,3), (4,3) & 6 & 13 & (6,2) , (6,2) & 5 & 6 \\ 
19 & 15 & (5,3), (5,3) & 7 & 15 & (7,2) , (7,2) & 6 & 7 \\ 
20 & 16 & (6,3), (5,3) & 8 & 17 & (8,2) , (8,2) & 7 & 8 \\ 
21 & 16 & (5,3), (5,3) & 6 & 14 & (7,2) , (6,2) & 6 & 6 \\ 
22 & 17 & (6,3), (6,3) & 10 & 19 & (9,2) , (8,2) & 7 & 8 \\ 
23 & 18 & (6,3), (6,3) & 8 & 18 & (9,2) , (8,2) & 8 & 8 \\ 
24 & 18 & (6,3), (6,3) & 9 & 18 & (8,2) , (8,2) & 7 & 7 \\ 
25 & 19 & (7,3), (6,3) & 10 & 20 & (9,2) , (9,2) & 8 & 8 \\ 
26 & 20 & (7,3), (7,3) & 11 & 22 & (10,2) , (10,2) & 9 & 9 \\ 
27 & 20 & (7,3), (7,3) & 12 & 22 & (10,2) , (10,2) & 8 & 10 \\ 
28 & 21 & (7,3), (7,3) & 10 & 21 & (10,2) , (10,2) & 9 & 10 \\ 
30 & 22 & (8,3), (7,3) & 12 & 23 & (11,2) , (10,2) & 9 & 10 \\ 
31 & 23 & (8,3), (8,3) & 13 & 25 & (12,2) , (11,2) & 10 & 11 \\ 
33 & 24 & (8,3), (8,3) & 12 & 24 & (11,2) , (11,2) & 10 & 10 \\ 
35 & 26 & (9,3), (9,3) & 14 & 28 & (13,2) , (13,2) & 12 & 12 \\ 
36 & 26 & (9,3), (9 ,3) & 15 & 28 & (13,2) , (13,2) & 11 & 13 \\ 
40 & 29 & (10,3), (10,3) & 16 & 31 & (15,2) , (14,2) & 13 & 14 \\ 
45 & 32 & (11,3), (11,3) & 18 & 34 & (16,2) , (16,2) & 14 & 16 \\  \hline
\end{array}$$

\caption{\label{tablealldata} Numerical data for all cases of the construction}
\end{table}

Summarizing, we obtain for $g$ among (\ref{goodgs}) the following unirational construction for curves in $\HH(6,10+2g)$:
\begin{enumerate}
\item[1.] We start with a general rational curve of degree $d''$ in $\PP$ together with a collection of $\ell$ general lines. Call the union $C''$.
\item[2.] We choose two general forms  $f_i' \in H^0(\II_{C''}(a_i',b_i'))$, $i=1,2$, that define a complete intersection $X'$ and obtain a
trigonal curve $C'=\overline{X'\smallsetminus C''}$ of degree $d'$ and genus $g'$.
\item[3.] We choose two general forms $f_i \in H^0(\II_{C'}(a_i,b_i))$, $i=1,2$, that define a complete intersection $X$ and obtain a
$6$-gonal curve $C=\overline{X\smallsetminus C'}$.
\end{enumerate}
It remains to show that the construction actually yields a parametrization of the Hurwitz spaces.
\newpage
\section{Proof of the dominance}
\begin{theorem}\label{theoremUnirationalComponent}
For all $(g,d)$ as in Table \ref{tablealldata}, there is a unirational component $H_g$ of the Hilbert scheme $\mathrm{Hilb}_{(6,d),g}(\PP)$ of curves in $\PP$ of bidegree $(6,d)$ and genus $g$. The generic
point of $H_g$ corresponds to a smooth absolutely irreducible curve $C$ such that the map $H^0(\OO_\PP(a,3))\to H^0(\OO_C(a,3))$ is of maximal for all $a>1$.
\end{theorem}
\begin{proof} The crucial part is to prove the existence of a curve with the desired properties.
Code \ref{codeConstruction} implements the construction above for any given value of $g$ in (\ref{goodgs})
and establishes the existence of a smooth and absolutely irreducible curve $C_p$ of given genus and bidegree defined over a prime field $\mathbb{F}_p$. This computation can be regarded as the reduction of a computation over $\mathbb{Q}$ which 
yields some curve $C_0$. This curve is already defined over the rationals, since all construction steps invoke only Groebner basis computations. By semicontinuity, $C_0$ is also smooth, absolutely irreducible and of maximal rank.\\
Again, by semicontinuity, there is a Zariski open neighborhood $U\subset \mathrm{Hilb}_{(6,d),g}(\PP)$ 
of points corresponding to smooth absolutely irreducible curves that fulfill the maximal rank condition. Let $\mathbb{A}^N$ be the parameter-space for all the choices made in the construction, i.e. the space of coefficients of the polynomials defining $C''$ and the complete intersections $X$ and $X'$. The construction then translates to a rational map $\mathbb{A}^N\dasharrow U$ defined over $\mathbb{Q}$ and we set $H_g$ to be the closure of the image of this map.
\end{proof}
\begin{remark}
We want to point out two issues concerning the computational verification:
\begin{enumerate}
\item The restriction to finite fields in the \emph{Macaulay2} computation in the appendix is only due to limitations in computational power. For very small values of $g$, i.e. $g\leq 15$, it is still possible to compute examples over the rationals  if all coefficients are chosen among integers of small absolute value.
\item The reduction of $C_0$ modulo $p$ gives a curve $C_p$ with desired properties for $p$ in an open part of $\mathrm{Spec}(\ZZ)$. Hence, the main theorem is also true in almost all characteristics $p$. One way to extend it to all prime numbers would be to keep trace of all denominators in a computation over the rationals and check case by case the primes where a bad reduction happens. This is computationally also out of reach at the moment.
\end{enumerate}
\end{remark}
It remains to show that there exists a dominant rational map from $H_g$ to the Hurwitz-scheme. 
\begin{theorem}
For $g$ among (\ref{goodgs}) and $H_g$ as in Theorem \ref{theoremUnirationalComponent} there is a
dominant rational map 
$$ H_d \dasharrow \HH(6,10+2g).$$
\end{theorem} \begin{proof} 
Using Code \ref{codeConstruction} again, we check for any given value of $g$ in (\ref{goodgs}) the existence of a point in $H_g$ corresponding to a smooth absolutely irreducible curve $C\subset \PP$ such that the projection 
onto $\PP^1$ is simply branched and the bundle $L_2 = \varphi^*\OO_\PP(0,1)$ is a smooth point in the corresponding $W^2_d(C)$. By semicontinuity, the loci of curves with this property is open and dense in $H_g$. Hence, we have a rational map $ H_g \dasharrow \HH(6,10+2g)$. The locus of curves in $\HH(6,10+2g)$ having a smooth component of the Brill-Noether loci of expected dimension is also open and contains the image
of $[C]$ under this map. Since $\HH(6,10+2g)$ is irreducible this locus is dense. This proves the theorem.\end{proof}
We want to emphasize the last statement in the proof:
\begin{corollary} \label{planeModels} For $g$ among (\ref{goodgs}) and $d=\left\lceil \frac{2}{3}g+2 \right\rceil$ the Brill-Noether locus $W^2_{d}(C)$ of a general curve $C\in \HH(6,10+2g)$ has a smooth generically reduced component of expected dimension $\rho$.
\end{corollary}

\section{Computational Verification}
The following Code for \textit{Macaulay2} \cite{M2} realizes the unirational
construction of a $6$-gonal curve of genus $g$ as in (\ref{goodgs}) over a finite field $K=\mathbb{F}_p$ with random choices for all parameters. \\
In order to explain the single steps in the computation,
we also print the most relevant parts of the output for the example case $g=24$. 
\begin{code}\label{codeConstruction}

We start with the following initialization:
{\small
\begin{verbatim}
   i1 : Fp=ZZ/32009; -- a finite field
        S=Fp[x_0,x1,y_0..y_2,Degrees=>{2:{1,0},3:{0,1}}];
           -- Cox-ring of P^1 x P^2
        m=ideal basis({1,1},S);
           -- irrelevant ideal
        setRandomSeed("HurwitzSpaces");
           -- initialization of the random number generator
\end{verbatim}}
The following functions handle the numerics of the construction:
{\small
\begin{verbatim}   
   i2 : expHilbFuncIdealSheaf=(g,d,a)->
           max(0,(a_0+1)*(a_1+2)*(a_1+1)/2-(a_0*d_0+a_1*d_1+1-g))
           -- expected number of sections of the ideal sheaf
        
        linkedGenus=(g,d,F,G)->(
           pX:=binomial(F_0+G_0-1,1)*binomial(F_1+G_1-1,2)-
               (F_0-1)*binomial(F_1-1,2)-(G_0-1)*binomial(G_1-1,2);
               -- genus of the complete intersection
           pX-d_0*(F_0+G_0-2)-d_1*(F_1+F_1-3)-1+g)
           -- genus of the linked curve
           
        linkedDegree=(g,d,F,G)->{F_1*G_1-d_0,F_0*G_1+G_0*F_1-d_1}
           -- bidegree of the linked curve
\end{verbatim}}
The first step is to determine the degree $d''$ of the rational curve 
and the number of lines $\ell$.
We start by computing the bidegrees of the forms that define the complete intersection for the linkage to the trigonal curve:
{\small
\begin{verbatim}
   i3 : g=24;
        d={6,ceiling(-g/3+g+2)}; 
           -- choose the second degree Brill-Noether general
        a=for i from 0 do 
            if expHilbFuncIdealSheaf(g,d,{i,3})!=0 then break i;
           -- find the minimal value a s.t. H^0(IC(a,3)) nonzero
        if expHilbFuncIdealSheaf(g,d,{a,3})==1 then 
           fX={{a+1,3},{a,3}} else fX={{a,3},{a,3}}; 
           -- choose bidegrees of forms for the complete intersection
        (d,fX)
   
   o3 = ({6, 18}, {{6, 3}, {6, 3}})
\end{verbatim}}
The genus and degree of the trigonal curve and the number of lines:
{\small
\begin{verbatim}
   i4 : g'=linkedGenus(g,d,fX_0,fX_1);
        d'=linkedDegree(g,d,fX_0,fX_1);
        l=d'_1-g'-2;
        (g',d',l)
   
   o4 = (9,{3,18},7)
\end{verbatim}}
We compute the bidegrees for the complete intersection for the linkage to the rational curve
{\small
\begin{verbatim}
   i5 : b=for i from 0 do 
            if expHilbFuncIdealSheaf(g',d',{i,2})!=0 then break i;
        if expHilbFuncIdealSheaf(g',d',{b,2})==1 then
           fX'={{b+1,2},{b,2}} else fX'={{b,2},{b,2}};
        d''=linkedDegree(g'+2*l,d'+{0,l},fX'_0,fX'_1);
        dRat={{ceiling(d''_1/2),1},{floor(d''_1/2),1}};
        (fX',d'')
        
   o5 = ({{8, 2}, {8, 2}}, {1, 7})
\end{verbatim}}
The second step is the actual construction: First, we choose a rational curve and random lines and compute the saturated vanishing ideal $I_{C''}$ of their union:
{\small
\begin{verbatim}
   i6 : ICrat=saturate(ideal random(S^1,S^(-dRat)),m);
        ILines=apply(l,i->ideal random(S^1,S^{{-1,0},{0,-1}}));
        time IC''=saturate(intersect(ILines|{ICrat}),ideal(x_0*y_0));
           -- used 1.29537 seconds
\end{verbatim}}
Next, we choose random forms in $I_{C''}$ of degree $b$ (resp. of $b+1$) that define the complete intersection $X'$ and compute the saturated vanishing ideal $I_{C'}$ of the trigonal curve $C'$. 
{\small
\begin{verbatim}
   i7 : IX'=ideal(gens IC'' * random(source gens IC'',S^(-fX')));
   	    IC'=IX':ICrat;
        time scan(l,i->IC'=IC':ILines_i);
        time IC'sat=saturate(IC',ideal(x_0*y_0));
           -- used 2.06236 seconds
           -- used 23.7319 seconds
\end{verbatim}}
In the final step, we compute the vanishing ideal of the $6$-gonal curve $C$ by linking $C'$ with a complete intersection $X$ given by random forms 
in $I_{C'}$ of degree $a$ (resp. $a+1$). 
{\small
\begin{verbatim}
   i8 : IX=ideal(gens IC'sat * random(source gens IC'sat,S^(-fX)));
   	    time IC=IX:IC';
        time ICsat=saturate(IC,ideal(x_0*y_0));
           -- used 15.7815 seconds
           -- used 3.84807 seconds
\end{verbatim}}
We check that $C$ is of maximal rank in the degrees $(a,3)$ by looking at the minimal generators of the saturated vanishing ideal:
{\small
\begin{verbatim}
   i9 : tally degrees ideal mingens gb ICsat
   
   o9 = Tally{{0, 18} => 1}
              {1, 14} => 5
              {1, 15} => 4
              {2, 8} => 2
              {2, 9} => 8
              {3, 6} => 9
              {4, 4} => 2
              {4, 5} => 8
              {5, 4} => 7
              {6, 3} => 3
              {7, 3} => 1
\end{verbatim}}

In order to check irreducibility, we compute the plane model $\Gamma$ of $C$:
{\small
\begin{verbatim}
   i10 : Sel=Fp[x_0,x_1,y_0..y_2,MonomialOrder=>Eliminate 2];
            -- eliminination order
         R=Fp[y_0..y_2]; -- coordinate ring of P^2
         IGammaC=sub(ideal selectInSubring(1,gens gb sub(ICsat,Sel)),R);
            -- ideal of the plane model
\end{verbatim}}~\\
We check that $\Gamma$ is a curve of desired degree and genus and its singular locus $\Delta$ consists only of ordinary double points:~\\

{\small
\begin{verbatim}
   i11 : distinctPoints=(J)->(
            singJ:=minors(2,jacobian J)+J;
            codim singJ==3)

   i12 : IDelta=ideal jacobian IGammaC + IGammaC; -- singular locus
         distinctPoints(IDelta)

   o12 = true

   i13 : delta=degree IDelta;
         dGamma=degree IGammaC;
         gGamma=binomial(dGamma-1,2)-delta;
         (dGamma,gGamma)==(d_1,g)

   o13 = true
\end{verbatim}}~\\
We compute the free resolution of $I_{\Delta}$:~\\
{\small
\begin{verbatim}
   i14 : time IDelta=saturate IDelta;
         betti res IDelta
            -- used 55.063 seconds 
             
                0 1 2
   o14 = total: 1 8 7
             0: 1 . .
             1: . . .
             2: . . .
             3: . . .
             4: . . .
             5: . . .
             6: . . .
             7: . . .
             8: . . .
             9: . . .
            10: . . .
            11: . . .            
            12: . . .
            13: . 8 .
            14: . . 7
\end{verbatim}}~\\
This is the resolution as expected. Hence, $C$ is absolutely irreducible by Proposition \ref{HilbertBurchPlanePoints} and $\OO(D_2)$ is a smooth point
of the Brill-Noether loci by Proposition \ref{BrillNoetherplane}.\\
It remains to verify that $C$ is actually smooth and simply branched. We compute the vanishing ideal $I_B\subset K[x_0,x_1]$ of the locus $B$ in $\PP^1$ of points with non-reduced fiber.
{\small
\begin{verbatim}
   i15 : gensICsat=flatten entries mingens ICsat;
         Icubics=ideal select(gensICsat,f->(degree f)_1==3);
            -- select the cubic forms
         Jacobian=diff(matrix{{y_0}..{y_2}},gens Icubics);
            -- compute the jacobian w.r.t. to vars of P^2
         IGraphB=minors(2,Jacobian)+Icubics;
         time IGraphBsat=saturate(IGraphB,ideal(x_0*y_0));
            -- used 60.2963 seconds
\end{verbatim}}
We check that the fibers over $B$ are disjoint from the preimages of the double points of the plane model. This shows that $C$ is smooth:
{\small
\begin{verbatim}
   i16 : time ISing=saturate(sub(IDelta,S)+IGraphBsat,ideal(S_0*S_2));
         degree ISing==0

   o16 = true
\end{verbatim}}
Finally, we verify that $B$ is reduced of expected degree $2g+10$ and hence that $C$ is simply branched.
{\small
\begin{verbatim}         
  i17 : time IGraphBsat=saturate(IGraphB,ideal(x_0*y_0));
        gensIGraphBsat=flatten entries mingens IGraphBsat;
        IB=ideal select(gensIGraphBsat,f->(degree f)_1==0);
        degree radical IB==2*g+10 
        
  o17 = true 
\end{verbatim}}

\end{code}

This code is available in form of a \emph{Macaulay2}-file from \cite{Code} for download. It takes approximately 5 hours CPU-time on a 2.4 GHz processor to check all cases.


\Addresses
\end{document}